\newtheorem{theorem}{Theorem}
\newtheorem{corollary}[theorem]{Corollary}
\def\qed{\vbox{\hrule
 \hbox{\vrule\hbox to 5pt{\vbox to 8pt{\vfil}\hfil}\vrule}\hrule}}
\journal{xxxxxx}
\begin{document}
\begin{frontmatter}

\title{New lower bounds for the Estrada and Signless Laplacian Estrada Index of a Graph}

\author[]{Juan L. Aguayo, Juan R. Carmona\corref{cor1}}
\ead{juanaguayo@uach.cl, juan.carmona@uach.cl}
\address{Instituto de Ciencias F\'isicas y Matem\'aticas,
  Universidad Austral de Chile, Independencia 631 - Valdivia - Chile.}
\cortext[cor1]{Corresponding author}

\author{Jonnathan Rodr\'{\i}guez}
\ead{jrodriguez01@ucn.cl}
\address{Departamento de Matem\'{a}ticas, Facultad de Ciencias B\'{a}sicas, Universidad de Antofagasta, Av Angamos 601, Antofagasta, Chile.}

\begin{abstract}
Let $G$ be a graph on $n$ vertices and $\lambda_1,\lambda_2,\ldots,\lambda_n$ its  eigenvalues. The Estrada index of
$G$ is defined as $EE(G)=\sum_{i=1}^n e^{\lambda_i}.$ In this paper, using a different demonstration technique, new lower bounds are obtained for the Estrada index and Signless Laplacian Estrada Index, that depends only the number of vertices, the number of edges and the energy of the graph is given.
Moreover, lower bounds for the Estrada index of an symmetric nonnegative matrix are established.
\end{abstract}

\begin{keyword}

Estrada Index; Energy; Adjacency matrix; Signless Laplacian matrix; Lower bound; Graph.
\MSC 05C50 \sep 15A18

\end{keyword}

\end{frontmatter}
\section{Introduction}

Throughout the paper, we consider G an arbitrary connected graph with edge set denoted by $\mathcal{E}(G)$ and its vertex set $V(G)=\{ 1, \ldots, n \}$ with cardinality $m$ and $n$, respectively. We say that $G$ is an $(n,m)$ graph.
If $ e \in \mathcal{E}(G)$ has end vertices $i $ and $j$, then we say that $i$ and $j$ are adjacent and this edge is denoted by $ij$. For a finite set $U$, $|U|$ denotes its cardinality.
Let $K_n$ be the complete graph with $n$ vertices and $\overline{K_n}$ its (edgeless) complement. A graph $G$ is bipartite if there exists a partitioning of $V(G)$ into disjoint, nonempty sets $V_1$ and $V_2$ such that the end vertices of each edge in $G$ are in distinct sets $V_1$, $V_2$. A graph $G$ is a complete bipartite graph if $G$ is bipartite and each vertex in $V_1$ is connected to all the vertices in $V_2$. If $|V_1|=p$ and $|V_2|=q$, the complete bipartite graph is denoted by $K_{p,q}$. For more properties of bipartite graphs, see \cite{haemers}.\\
	
	
The adjacency matrix $A(G)$ of  the  graph $G$ is  a 
symmetric matrix of order $n$ with entries $a_{ij}$, such that $a_{ij}=1$ if $ij \in \mathcal{E}(G)$ and $a_{ij}=0$ otherwise. Denoted by $\lambda_1\geq \ldots \geq \lambda_n$ to the eigenvalues of $A(G)$,  
see \cite{C-D-S1, C-D-S2}.\\
The matrix $Q(G)=D(G)+A(G)$ is called the signless Laplacian matrix of $G$
\cite{m2,m3,m4}, where $D(G)$ is the diagonal matrix of vertex degrees of $G$. 
It well know that the matrix $Q(G)$ is positive semidefinite, see \cite{m46}. We denoted the eigenvalues of the signless Laplacian matrix by $q_{1}(G)\geq \ldots \geq q_{n-1}(G) \geq q_{n}(G)\geq 0.$\\
    
 A matrix is singular if it has zero as an eigenvalue, otherwise, it is called non-singular. A graph $G$ will be said non-singular if its adjacency matrix is non-singular.\\
    

The Estrada index of the graph $G$ is defined as
$$EE(G)= \sum_{i=1}^n e^{\lambda_i}.$$
This spectral quantity is put forward by E. Estrada \cite{E1} in the year 2000. There have been found a lot of chemical and physical applications, including quantifying the degree of folding of long-chain proteins, \cite{E1, E2, E3, G-F-G-M-V, G-R-F-M-S, G-G-M-S}, 
and complex networks \cite{E4, E5, S1, S2, S3, S4}. 
Mathematical properties of this invariant can be found in e.g. \cite{F-A-G, G-D-R, G-R, K, S5, Z, Z-Z}. \\

In \cite{A}, was introduced the innovative notion of the signless Laplacian Estrada index of a graph as $$EE(Q(G))=SLEE(G)= \sum_{i=1}^n e^{q_i}.$$ Moreover, the authors established new lower and upper bounds for $EE(Q(G))$ in terms of the number of vertices and edges for this invariant.\\

Denote by $M_k = M_k(G)$ to the $k$-th spectral moment of the graph $G$, i.e., $$M_k= \displaystyle \sum_{i=1}^n(\lambda_i)^k.$$ Then, we can write the Estrada index as $$EE(G) = \sum_{k=0}^{\infty}\frac{M_k}{k!}.$$ In \cite{C-D-S1}, for an $(n, m)$-graph $G$, the authors proved that

\begin{equation}\label{eq1}
	M_0 =n,\,\, M_1 =0,\,\, M_2 =2m,\,\, M_3 =6t,
\end{equation}
where $t$ is the number of triangles in $G$.\\
	
\noindent The energy of $G$ was defined by I. Gutman in 1978, \cite{G1}, as $$E(G)= \sum_{i=1}^n|\lambda_i|.$$
Similarly, the signless Laplacian energy of a graph $G$ with $n$ vertices and $m$ edges is defined in \cite{m8} by

\begin{equation}
    QE(G)=\sum_{i=1}^{n}\Big\arrowvert q_{i}-\frac{2m}{n}\Big\arrowvert.  
\end{equation}

\noindent Nowadays studies on the signless Laplacian energy are still in course \cite{m32,m30}.
    
\noindent In \cite{K-M}, Koolen and Moulton showed that the following relation holds for all graph $G$

\begin{equation}\label{1}
	E(G) \leq \lambda_1+\sqrt{(n - 1)(2m-\lambda_1^2)}.
\end{equation}
	
For all $(n,m)$-graph $G$ connected and nonsingular, Das et al. in \cite{D-M-G}, proved the following relation holds
	
\begin{equation}\label{2}
	E(G) \geq \lambda_1 +(n-1) +\ln(det A)+\ln(\lambda_1),
\end{equation}
	
then using the inequality $2m/n \leq \lambda_1$, they obtained the following upper bound 	$$E(G)\geq \frac{2m}{n} + (n-1) + \ln | detA |+\ln \left(\frac{2 m}{ n}\right ).$$ 	With respect to Estrada index, for all $(n,  m)$-graph, De  la  Pe\~na  et  al. in \cite{P-G-R} proved that

	\begin{equation}\label{123}
	    \sqrt{n^2+4m}\leq EE(G) \leq n-1 + e^{\sqrt{2m}}.
	\end{equation}

 Equality  on  both  sides  of (\ref{123}) is  attained if  and  only  if $G$ is isomorphic to $\overline{K_n}$.\\
	
\noindent	In this paper, under the motivation and ideas in \cite{D-M-G}, we obtain new lower bounds for the Estrada index and Estrada Index of the Signless Laplacian matrix in terms of the number of vertices, edges, and energy of an arbitrary G graph. Furthermore, lower bound for $ EE (G) $ considering an arbitrary symmetric nonnegative matrix are established.\\

\section{Lower bound for the Estrada index}

In this section, new lower bounds for Estrada index in relation to the number of vertices and edges of a given graph are established.
	
\begin{theorem} 
		Let $G$ a $(n, m)$-graph  then,
		\begin{equation}\label{cota0}
		EE(G)\geq e^{\left( \frac{2m}{n}\right)}+(n-1)-\frac{2m}{n}.
		\end{equation}
		Equality holds in (\ref{cota0}) if and only if G is isomorphic to $\overline{K_n}$.
\end{theorem}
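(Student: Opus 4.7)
The plan is to isolate the largest eigenvalue $\lambda_1$ from the remaining $n-1$ eigenvalues and bound the latter using the elementary convexity inequality $e^x \ge 1+x$, valid for all real $x$.

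First, I would write
\[
EE(G) = e^{\lambda_1} + \sum_{i=2}^n e^{\lambda_i},
\]
and apply $e^{\lambda_i} \ge 1 + \lambda_i$ to each term in the sum. Using $M_1 = 0$ from (\ref{eq1}), which gives $\sum_{i=2}^n \lambda_i = -\lambda_1$, this yields
\[
EE(G) \;\ge\; e^{\lambda_1} + (n-1) - \lambda_1.
\]

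Next, I would observe that the auxiliary function $\varphi(x) = e^x - x$ satisfies $\varphi'(x) = e^x - 1 \ge 0$ for $x \ge 0$, so it is nondecreasing on $[0,\infty)$. Combining this with the classical spectral inequality $\lambda_1 \ge 2m/n$ (which follows from the Rayleigh quotient applied to the all-ones vector, and for which $2m/n \ge 0$) gives
\[
e^{\lambda_1} - \lambda_1 \;\ge\; e^{2m/n} - \tfrac{2m}{n},
\]
from which the desired bound (\ref{cota0}) follows immediately.

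For the equality discussion, the inequality $e^x \ge 1+x$ is strict unless $x=0$, so equality in the first step forces $\lambda_2 = \cdots = \lambda_n = 0$. Equality in $\lambda_1 \ge 2m/n$ together with $\sum_i \lambda_i = 0$ then forces $\lambda_1 = 0$, hence $m = 0$, i.e.\ $G \cong \overline{K_n}$. Conversely, for $\overline{K_n}$ every eigenvalue is $0$ and both sides of (\ref{cota0}) equal $n$, so equality is indeed achieved.

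There is no substantial obstacle here; the only delicate point is verifying that the monotonicity step is legitimate, which is immediate once one notes $2m/n \ge 0$. The argument is a clean two-step application of $e^x \ge 1+x$ followed by the standard bound on the spectral radius, and the equality analysis is forced by the strictness of both inequalities simultaneously.
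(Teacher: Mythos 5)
Your proof is correct and takes essentially the same route as the paper: the inequality $e^x \geq 1+x$ applied to $\lambda_2,\dots,\lambda_n$ (the paper phrases it equivalently as $x \geq 1+\ln x$), followed by monotonicity of $e^x - x$ on $[0,\infty)$ combined with $\lambda_1 \geq 2m/n$. Your equality analysis, which deduces $\lambda_1=0$ directly from the zero trace and hence $m=0$, is in fact a bit cleaner than the paper's appeal to regularity.
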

\begin{proof}
		Consider the following function 
		\begin{equation}\label{f}
		f(x)= (x-1)-\ln(x), \, \, x>0.
		\end{equation}
		Clearly the function $f$ is decreasing in $(0,1]$ and increasing in $[1,+\infty)$
		Consequently, $f(x) \geq f(1) = 0$, implying that 
		\begin{equation}\label{eq7}
		x \geq 1 + \ln x, \,\,\,x > 0,
		\end{equation}
		the equality holds if and only if $x = 1$. Let $G$ be a graph of order $n$, using (\ref{eq1}) and (\ref{eq7}), we get:
		\begin{equation}\label{Energia0}
		\begin{array}{lllll}
		EE(G) & \geq & e^{\lambda_1} + (n-1)+ \displaystyle \sum_{k=2}^n \ln e^{\lambda_k}  \\
		& = & e^{\lambda_1} + (n -1) +\displaystyle \sum_{k=2}^n \lambda_k \\
		& = &  e^{\lambda_1} + (n -1) +M_1 - \lambda_1\\
		
		& = &  e^{\lambda_1} + (n -1)  - \lambda_1.
		\end{array}
		\end{equation}
		Define the function
		\begin{equation}\label{fi}
		\phi(x)= e^x + (n-1) - x,\qquad x>0.
		\end{equation}
		
		\noindent Note than, this is an increasing function on $D_{\phi}=[0,+\infty)$.\\
		
\noindent On the other hand, we have inequality
		$$\lambda_1\geq \frac{2m}{n}\geq 0,$$
		
\noindent therefore, we verify
	    $$\phi\left(\lambda_1\right)\geq \phi\left(\frac{2m}{n}\right).$$
	
\noindent Finally, we get

\begin{equation}\label{J}
EE(G)\geq e^{\left( \frac{2m}{n}\right)}+(n-1)- \left(\frac{2m}{n}\right)=J(G).    
\end{equation}

Suppose now that the equality holds in (\ref{cota0}). Then all the inequalities in (\ref{Energia0}) must be considered as equalities. From the equality (\ref{eq7}), we get $e^{\lambda_2}=\ldots=e^{\lambda_n}=1,$ then $\lambda_2=\ldots={\lambda_n}=0$ and $\lambda_1=\frac{2m}{n}$. It follows that $G$ is regular. Thus $G$ is isomorphic to the $\overline{K_n}.$ \cite{Me00}
		\end{proof}
\vspace{0.5cm}
A direct consequence is the following result.

	\begin{corollary}
	    Let $G$ be a connected $\alpha$-regular graph with $n$ vertices. Then,
		\begin{equation}
		EE(G)\geq e^{\alpha}+n-\alpha-1.
		\end{equation}
	\end{corollary}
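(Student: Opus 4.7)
The plan is to observe that this corollary is an immediate specialization of the preceding theorem, obtained by exploiting the regularity hypothesis to compute $\frac{2m}{n}$ explicitly. So no new inequality machinery is required; the work is purely in substituting the correct edge count.

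First I would recall the handshake identity: for an $\alpha$-regular graph on $n$ vertices, every vertex has degree $\alpha$, hence $\sum_{i=1}^n \deg(i) = n\alpha = 2m$, which gives $\frac{2m}{n} = \alpha$. Then I would invoke inequality (\ref{cota0}) from the theorem, which asserts
\begin{equation*}
EE(G) \geq e^{\left(\frac{2m}{n}\right)} + (n-1) - \frac{2m}{n},
\end{equation*}
and plug in $\frac{2m}{n} = \alpha$ to obtain $EE(G) \geq e^{\alpha} + (n-1) - \alpha = e^{\alpha} + n - \alpha - 1$, which is the desired bound.

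There is no real obstacle here — the connectedness and regularity hypotheses serve only to guarantee that $\alpha = \frac{2m}{n}$ is a well-defined nonnegative number, so that the monotonicity argument behind the theorem applies cleanly. I would not discuss the equality case separately, since the corollary does not claim one; but if desired, one could note that equality in (\ref{cota0}) forces $G \cong \overline{K_n}$, which corresponds to the trivial $0$-regular case $\alpha = 0$.
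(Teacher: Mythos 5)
Your proposal is correct and matches the paper's intent exactly: the corollary is stated there as a direct consequence of Theorem 1, obtained by substituting $\frac{2m}{n}=\alpha$ via the handshake identity for an $\alpha$-regular graph. Nothing further is needed.
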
 
	
In the following result, we obtain a sharp lower bound of the Estrada index for a bipartite graph.
	
	\begin{theorem}
		Let $G$ be a bipartite $(n,m)$-graph, with $n\geq2$, then
		\begin{equation}\label{cota1}
		EE(G)\geq 2\cosh{\left(\frac{2m}{n}\right)}+(n-2).		
		\end{equation}
Equality holds in (\ref{cota1}) if and only if G is isomorphic to $K_{\sqrt{m},\sqrt{m}}$, with $2\sqrt{m}=n$.
	\end{theorem}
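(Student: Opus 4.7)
The plan is to mirror the proof of the previous theorem but exploit the bipartite symmetry of the spectrum, namely that $\lambda_i = -\lambda_{n+1-i}$ for all $i$, and in particular $\lambda_n = -\lambda_1$. This will let two exponentials (rather than one) be kept ``outside'' the estimate, producing a $\cosh$ term.

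First I would split the Estrada index as
\[
EE(G) = e^{\lambda_1} + e^{\lambda_n} + \sum_{k=2}^{n-1} e^{\lambda_k}
      = e^{\lambda_1} + e^{-\lambda_1} + \sum_{k=2}^{n-1} e^{\lambda_k},
\]
and apply inequality (\ref{eq7}) to each of the $n-2$ middle terms. Using $M_1=0$ and $\lambda_n=-\lambda_1$, the sum of the interior eigenvalues is $M_1-\lambda_1-\lambda_n=0$, hence
\[
EE(G) \;\geq\; e^{\lambda_1}+e^{-\lambda_1} + (n-2) + \sum_{k=2}^{n-1}\lambda_k
      \;=\; 2\cosh(\lambda_1) + (n-2).
\]

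Next I would define $\psi(x)=2\cosh(x)+(n-2)$ and note that $\psi$ is strictly increasing on $[0,+\infty)$. Since $\lambda_1\geq 2m/n\geq 0$, this yields $\psi(\lambda_1)\geq \psi(2m/n)$, giving exactly the bound (\ref{cota1}).

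For equality, tracing back through (\ref{eq7}) forces $e^{\lambda_k}=1+\lambda_k$, hence $\lambda_k=0$, for every $k=2,\ldots,n-1$, and monotonicity of $\psi$ forces $\lambda_1=2m/n$. Thus the spectrum is $\{\lambda_1,0,\ldots,0,-\lambda_1\}$. Using $M_2=2\lambda_1^2=2m$ we obtain $\lambda_1=\sqrt{m}$, and combining with $\lambda_1=2m/n$ gives $n=2\sqrt{m}$. A bipartite graph whose spectrum consists only of $\pm\lambda_1$ and zeros is a complete bipartite graph, and the balance condition forces $G\cong K_{\sqrt{m},\sqrt{m}}$.

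The only real obstacle is the equality characterization: identifying which bipartite graphs have spectrum $\{\pm\sqrt{m},0,\ldots,0\}$. This is a standard spectral characterization of $K_{p,q}$ (whose nonzero eigenvalues are $\pm\sqrt{pq}$), so the argument should be short, and the rest of the proof is a direct computation once the bipartite symmetry is invoked.
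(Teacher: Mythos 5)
Your proposal is correct and follows essentially the same route as the paper: the same decomposition $EE(G)=e^{\lambda_1}+e^{-\lambda_1}+\sum_{k=2}^{n-1}e^{\lambda_k}$, the same use of $x\geq 1+\ln x$ on the interior terms, the same monotonicity argument for $2\cosh x+(n-2)$ with $\lambda_1\geq 2m/n$, and the same equality analysis. Your derivation of $\lambda_1=\sqrt{m}$ from $M_2=2m$ in the equality case is in fact slightly more explicit than the paper's, which simply asserts that a regular bipartite graph with three distinct eigenvalues is $K_{\sqrt{m},\sqrt{m}}$.
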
	
	
	\begin{proof}
	Considering (\ref{eq1}) and (\ref{eq7}), we obtain
	\begin{equation}\label{EE2}
	\begin{array}{lllll}
	EE(G) & = & e^{\lambda_1}+e^{-\lambda_1} + \displaystyle \sum_{k=2}^{n-1} e^{\lambda_k}  \\
	& \geq & 2\cosh{\lambda_1} + (n-2) + \displaystyle \sum_{k=2}^{n-1} \lambda_k \\
	& = &  2\cosh{\lambda_1} + (n-2) + M_1 + \lambda_1 - \lambda_1\\
	& = &   2\cosh{\lambda_1} +(n-2).
	\end{array}
	\end{equation}	
	Since, $\Phi(x)= 2\cosh{x} + n-2,$
	%
	is an increasing function on $D_{\Phi}=[0,+\infty)$. \\
	
Analogous to Theorem 1, we have inequality 
	
$$\lambda_1\geq \frac{2m}{n}\geq 0,$$

\noindent therefore, we verify
	    $$\Phi\left(\lambda_1\right)\geq \Phi\left(\frac{2m}{n}\right).$$
	
\noindent Finally, we get
	
\begin{equation}\label{JB}
    EE(G)\geq 2\cosh{\left(\frac{2m}{n}\right)}+(n-2)=JB(G).
\end{equation}	

Suppose now that the equality holds in (\ref{cota1}). Then all the inequalities in (\ref{EE2}) must be considered as equalities. From the equality (\ref{eq7}), we get $e^{\lambda_2}=\ldots=e^{\lambda_{n-1}}=1,$ then $\lambda_2=\ldots={\lambda_{n-1}}=0$ and $\lambda_1=-\lambda_n=\frac{2m}{n}$, which implies that $G$ is regular with three distinct eigenvalues, then $G \cong K_{\sqrt{m},\sqrt{m}}$, with $2\sqrt{m}=n$.. (see \cite{K-M2}.) 

	\end{proof}
	\vspace{0,3cm}
	The next result is obtained applying the proof technique of the above Theorem to signless Laplacian matrix of graph $G$.

\begin{theorem} 

		Let $G$ a $(n, m)$-graph  then,
		\begin{equation}\label{cota}
		EE(Q(G))\geq e^{\left( \frac{4m}{n}\right)}+(n-1)+2m-\frac{4m}{n}
		\end{equation}
		Equality holds in (\ref{cota}) if and only if G is isomorphic to $\overline{K_n}$.
	\end{theorem}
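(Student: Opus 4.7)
The plan is to mirror the proof of Theorem 1, but with the adjacency matrix replaced by the signless Laplacian $Q(G)$. Write $q_1 \geq q_2 \geq \cdots \geq q_n \geq 0$ for the eigenvalues of $Q(G)$, and recall the two elementary spectral moments that will do all the work: $\operatorname{tr}(Q) = \sum_i q_i = 2m$ (since the diagonal of $Q$ is the degree sequence), and the fact that $Q$ is positive semidefinite. The starting point is to split off the largest eigenvalue,
\begin{equation*}
EE(Q(G)) = e^{q_1} + \sum_{k=2}^{n} e^{q_k},
\end{equation*}
and then apply the inequality $e^{q_k} \geq 1 + q_k$, which follows from the auxiliary function $f(x) = (x-1) - \ln x$ exactly as in (\ref{eq7}). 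Combined with the trace identity this gives
\begin{equation*}
EE(Q(G)) \;\geq\; e^{q_1} + (n-1) + \sum_{k=2}^{n} q_k \;=\; e^{q_1} + (n-1) + 2m - q_1.
\end{equation*}

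Next I would define $\phi(x) = e^{x} + (n-1) + 2m - x$ and note that $\phi'(x) = e^{x} - 1 \geq 0$ on $[0,+\infty)$, so $\phi$ is nondecreasing there. To finish, I need a suitable lower bound on $q_1$. The required bound is $q_1 \geq 4m/n$, which is a standard Rayleigh-quotient estimate obtained by plugging the all-ones vector $\mathbf{1}$ into the quadratic form of $Q = D + A$:
\begin{equation*}
q_1 \;\geq\; \frac{\mathbf{1}^{\top} Q(G) \mathbf{1}}{\mathbf{1}^{\top} \mathbf{1}} \;=\; \frac{\sum_i d_i + 2m}{n} \;=\; \frac{4m}{n}.
\end{equation*}
Monotonicity of $\phi$ then yields $\phi(q_1) \geq \phi(4m/n)$, which is precisely the desired inequality (\ref{cota}).

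For the equality case, I would trace through as in Theorem 1. Equality in $e^{q_k} \geq 1 + q_k$ for every $k\ge 2$ forces $q_2 = \cdots = q_n = 0$; combined with $\operatorname{tr}(Q) = 2m$ this gives $q_1 = 2m$. Equality in $\phi(q_1) \geq \phi(4m/n)$ (where $\phi$ is strictly increasing on the interior) forces $q_1 = 4m/n$, so $2m = 4m/n$, which under the implicit assumption $n \geq 3$ leaves only $m=0$. Having spectrum $\{0,0,\ldots,0\}$ for the signless Laplacian means $G$ is edgeless, i.e.\ $G \cong \overline{K_n}$; conversely one checks directly that both sides equal $n$ for $\overline{K_n}$.

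I do not expect a real obstacle here: the only nontrivial ingredient beyond what Theorem 1 already provides is the Rayleigh bound $q_1 \geq 4m/n$ (which is why the exponent in (\ref{cota}) is $4m/n$ rather than $2m/n$) and the appearance of the additive $2m$ term coming from the nonzero trace of $Q$. The equality characterization is the most delicate step, because it relies on the (known) fact that $Q(G)$ can have $0$ as an eigenvalue of multiplicity $n-1$ only in degenerate low-order cases; for $n \geq 3$ this pins down $\overline{K_n}$ uniquely.
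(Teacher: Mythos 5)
Your proposal is correct and follows the paper's overall strategy (split off $e^{q_1}$, apply $e^{x}\geq 1+x$ to the remaining terms, use $\operatorname{tr}(Q)=2m$, then push $q_1$ down to $4m/n$ through a monotone auxiliary function). The one genuinely different ingredient is how you obtain $q_1\geq 4m/n$: the paper cites the inequality $q_1\geq 2\lambda_1$ of Feng, Li and Zhang and chains it with $\lambda_1\geq 2m/n$, whereas you derive the same bound directly from the Rayleigh quotient $\mathbf{1}^{\top}Q\mathbf{1}/n=4m/n$. Your route is more elementary and self-contained, at the cost of not exhibiting the intermediate comparison with $\lambda_1$; both give exactly what is needed. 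Your equality analysis is also more careful than the paper's: the paper asserts $q_1=2m/n$ (apparently a slip for $q_1=2m$) and jumps to regularity, while you correctly combine $q_1=2m$ from the trace with $q_1=4m/n$ from the tightness of the monotonicity step to force $m(n-2)=0$. Your observation that this only pins down $\overline{K_n}$ for $n\geq 3$ is a real point: for $n=2$ the graph $K_2$ has signless Laplacian spectrum $\{2,0\}$ and attains equality ($e^{2}+1$ on both sides), so the stated "if and only if" needs the implicit restriction you flag.
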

	
	\begin{proof}
        Using (\ref{eq7}), we get:
        \begin{equation}\label{Energia}
		\begin{array}{lllll}
		EE(Q(G)) & = & e^{q_1} +  \displaystyle \sum_{k=2}^n e^{q_k}  \\
	    & \geq & e^{q_1} + (n -1) +\displaystyle \sum_{k=2}^n \ln(e^{q_k}) \\
		& = & e^{q_1} + (n -1) +\displaystyle \sum_{k=2}^n q_k \\
		& = &  e^{q_1} + (n -1) +\displaystyle \sum_{k=1}^n q_k - q_1\\
		
		& = &  e^{q_1} + (n -1)+2m  - q_1.
		\end{array}
		\end{equation}
		Thereby, the function $\tau(x)= e^x + n-1+2m - x,x>0,$
		%
		is an increasing function on $D_{\tau}=[0,+\infty)$.\\
		
		\noindent In \cite{Feng}, is showed the inequality
		$q_1\geq 2\lambda_1$ and join with  $\lambda_1 \geq \frac{2m}{n},$
		we verify
	    $$\tau\left(q_1\right)\geq \tau\left(\frac{4m}{n}\right).$$
	
		\noindent Finally, we get
		$$ EE(Q(G))\geq e^{\left( \frac{4m}{n}\right)}+(n-1)+2m- \left(\frac{4m}{n}\right).$$
		Suppose now that the equality holds in (\ref{Energia}). From the equality in (\ref{eq7}), we get $e^{q_2}=\ldots=e^{q_n}=1,$ then $q_2= \ldots={q_n}=0$  and $q_1=\frac{2m}{n}$. It follows that $G$ is regular. Thus $G$ is isomorphic to the $\overline{K_n}.$ 
		\end{proof}
\vspace{0.4cm}

\subsection{Estrada Index of a symmetric nonnegative  matrix} \vspace{0.4cm}

In view of the demonstration technique used in the previously to obtain lower bounds, we will apply this to the Estrada Index of a given arbitrary Hermitian matrix.\\

A symmetric nonnegative matrix $M=\left( m_{ij}\right) $ of order $n $. Let $\rho_{1},\rho_{2},\dots,\rho_{n}$ be the eigenvalues of $M$ and consider $\left\vert \rho_{i}\right\vert \leq \rho_{1}$, for all $i=1,\ldots,n$.\\

The next result shows bounds for the greater eigenvalue of the matrix in relation with your rows, see \cite{M}.

\begin{theorem}
    If $M$ is symmetric nonnegative matrix with maximal eigenvalue $\lambda $ and row sums
    $s_{1},\dots ,s_{n}$. Then%
    \begin{equation}
    r\leq \lambda \leq R  \label{RSIM}
    \end{equation}%
    where $r=\min \left\{ s_{i}:i=1,\dots ,n\right\} $ and $R=\max \left\{
    s_{i}:i=1,\dots ,n\right\} .$ If $M$ is irreducible, then equality can hold on
    either side of (\ref{RSIM}) if and only if all row sums of $M$ are equals.
\end{theorem}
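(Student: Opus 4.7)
The plan is to combine the Rayleigh quotient characterisation of the largest eigenvalue of a symmetric matrix with the Perron--Frobenius theorem for nonnegative matrices. Since $M$ is symmetric, $\lambda=\max_{x\neq 0}\frac{x^{T}Mx}{x^{T}x}$, and since $M$ is entrywise nonnegative there exists an eigenvector $v\geq 0$ associated with $\lambda$, which is strictly positive when $M$ is irreducible.

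For the lower bound $r\leq \lambda$, I would test the Rayleigh quotient at the all-ones vector $\mathbf{1}=(1,\dots,1)^{T}$: a direct computation gives $\mathbf{1}^{T}M\mathbf{1}=\sum_{i=1}^{n}s_{i}$ and $\mathbf{1}^{T}\mathbf{1}=n$, so
\[
\lambda \;\geq\; \frac{1}{n}\sum_{i=1}^{n}s_{i}\;\geq\; r.
\]
For the upper bound $\lambda\leq R$, I would fix a nonnegative eigenvector $v$ of $\lambda$ and let $k$ be an index with $v_{k}=\max_{i}v_{i}>0$. The identity $(Mv)_{k}=\lambda v_{k}$ then yields
\[
\lambda v_{k}\;=\;\sum_{j=1}^{n}m_{kj}v_{j}\;\leq\; v_{k}\sum_{j=1}^{n}m_{kj}\;=\;s_{k}v_{k}\;\leq\; R\,v_{k},
\]
so $\lambda\leq R$.

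For the equality statement, if every row sum of $M$ equals a common value $s$, then $M\mathbf{1}=s\mathbf{1}$ exhibits $s$ as an eigenvalue and the two-sided bound forces $s=\lambda=r=R$. Conversely, assume $M$ is irreducible and $\lambda=R$. Then $v$ is strictly positive, and the chain of inequalities above is tight at $k$, forcing $s_{k}=R$ and $v_{j}=v_{k}$ for every $j$ with $m_{kj}>0$. Each such $j$ is itself a maximiser of $v$, so the argument propagates. Irreducibility, i.e.\ strong connectedness of the directed graph of nonzero entries of $M$, eventually makes $v$ constant, and then $M\mathbf{1}=\lambda\mathbf{1}$ forces every row sum to equal $R$. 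The case $\lambda=r$ is handled symmetrically by taking $\ell$ with $v_{\ell}=\min_{i}v_{i}$, using that $v_{\ell}>0$ under irreducibility.

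The main obstacle I expect is the equality analysis in the irreducible case: propagating $v_{j}=v_{k}$ from a single maximising index to the whole vector requires a careful induction along paths in the digraph associated with $M$, whereas the two bounds themselves follow in one line each from Rayleigh and Perron--Frobenius.
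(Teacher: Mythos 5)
Your proof is correct. Note, however, that the paper does not prove this statement at all: it is quoted as a known result from Minc's \emph{Nonnegative Matrices} (reference \cite{M}) and used as a black box to bound $\rho_1$ from below by $r$, so there is no in-paper argument to compare against. Your two-pronged argument is the standard one and is sound: the Rayleigh quotient at the all-ones vector gives $\lambda \geq \frac{1}{n}\sum_i s_i \geq r$ (using that for a symmetric nonnegative matrix the largest eigenvalue coincides with the spectral radius), and evaluating $Mv=\lambda v$ at a coordinate where a nonnegative Perron eigenvector attains its maximum gives $\lambda \leq R$. The equality analysis is also handled correctly: strict positivity of $v$ under irreducibility, tightness of the inequality chain at a maximising (resp.\ minimising) index, and propagation of the constancy of $v$ along the connected graph of nonzero entries, after which $M\mathbf{1}=\lambda\mathbf{1}$ forces all row sums to be equal. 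The only point worth making explicit in a final write-up is the existence of a nonnegative eigenvector for the top eigenvalue in the possibly reducible case (a limiting or block-decomposition form of Perron--Frobenius), which you invoke implicitly for the upper bound.
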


Applying the above proof technique to the matrix $M$ and considering the inequality $x\geq 1+\ln \left( x\right) $, we have%

\begin{eqnarray*}
EE\left( M\right)  &=&e^{\rho _{1}} +\sum_{i=2}^{n} e^{\rho _{i}}  \\
&\geq & e^{\rho _{1}} +(n-1) +\sum_{i=2}^{n}\rho
_{i} \\
&=& e^{\rho _{1}} +(n-1) +\sum_{i=1}^{n}\rho
_{i}-\rho _{1} \\
&=& e^{\rho _{1}} + (n-1) +Tr(M)-\rho _{1},
\end{eqnarray*}%
where $Tr(M)$ is the trace of $M$. Define the function $$\psi \left( x\right) = e^x +
(n-1) +Tr(M)-x.$$ Its straightforward verified that $\psi \left(
x\right) $ is increasing in $[0,\infty )$. Then, by above Theorem $\rho
_{1}\geq \min \left\{ s_{i}:i=1,\dots ,n\right\} =r.$ Since $\psi \left(
\rho _{1}\right) \geq \psi \left( r\right) $, we proved the following result.

\begin{theorem}
Let $M$ be a nonnegative symetric matrix of order $n$ with eigenvalues $\rho _{1},\rho
_{2},\dots ,\rho _{n}$ and $r=\min \left\{ s_{i}:i=1,\dots ,n\right\}.$ Then%
\[
EE\left( G\right) \geq e^r +Tr\left(M\right)+(n-1)-r.
\]
\end{theorem}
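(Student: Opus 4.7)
The plan is essentially to mirror the derivation sketched in the paragraph preceding the theorem statement, making the three ingredients explicit. First I would write $EE(M)=e^{\rho_1}+\sum_{i=2}^{n}e^{\rho_i}$, splitting off the Perron-type eigenvalue $\rho_1$, and then apply the scalar inequality $e^{y}\geq 1+y$ (equivalently $x\geq 1+\ln x$ with $x=e^{y}$) to each of the remaining $n-1$ exponentials. This produces
\[
EE(M)\geq e^{\rho_1}+(n-1)+\sum_{i=2}^{n}\rho_i.
\]

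Next I would replace $\sum_{i=2}^{n}\rho_i$ by $Tr(M)-\rho_1$, which is valid because the trace equals the sum of all eigenvalues of the symmetric matrix $M$. This yields the expression $e^{\rho_1}+(n-1)+Tr(M)-\rho_1$ and reduces the problem to a one-variable monotonicity argument in $\rho_1$.

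I would then define $\psi(x)=e^{x}+(n-1)+Tr(M)-x$ and observe that $\psi'(x)=e^{x}-1\geq 0$ for all $x\geq 0$, so $\psi$ is nondecreasing on $[0,\infty)$. The last ingredient is to lower-bound $\rho_1$ by $r=\min_i s_i$ using the preceding theorem on row sums of a symmetric nonnegative matrix. Since $M$ is entrywise nonnegative, each row sum $s_i$ is nonnegative, so $r\geq 0$ and we may apply the monotonicity of $\psi$ to conclude $\psi(\rho_1)\geq \psi(r)$, which is exactly the claimed bound.

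The only place where I would expect any subtlety is in confirming that $\rho_1\geq r\geq 0$ so that the argument of $\psi$ stays in the region where it is increasing; this is immediate from the nonnegativity assumption on $M$ and the row-sum theorem quoted just above, so no new machinery is required. The entire proof is therefore a short combination of (i) the convexity inequality $e^{y}\geq 1+y$, (ii) the trace identity $\sum_i \rho_i=Tr(M)$, and (iii) the Perron-type lower bound $\rho_1\geq r$.
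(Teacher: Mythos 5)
Your proposal is correct and follows essentially the same route as the paper: apply $e^{y}\geq 1+y$ to the $n-1$ eigenvalues other than $\rho_1$, rewrite $\sum_{i\geq 2}\rho_i$ as $Tr(M)-\rho_1$, and use monotonicity of $\psi(x)=e^{x}+(n-1)+Tr(M)-x$ together with the row-sum bound $\rho_1\geq r\geq 0$. Your explicit remark that the nonnegativity of $M$ guarantees $r\geq 0$ (so that $\psi$ is applied on its increasing range) is a small point the paper leaves implicit, but otherwise the two arguments coincide.
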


\section{Estrada index and energy}

In this section, new lower bounds for the Estrada index in relation to the energy of the graph $G$ are established.
	
	\begin{theorem}\label{Eny}
		Let $G$ be a $\left( n,m\right) $-graph with $k$ non-negative eigenvalues. Then%
\begin{equation}\label{co1}
    EE\left( G\right) \geq \frac{E\left( G\right) }{2}+e^{ \lambda
_{1}} +(k-1)-\lambda _{1}.
\end{equation}

	Equality holds in (\ref{co1}) if and only if G is isomorphic to $\overline{K_n}$.
	\end{theorem}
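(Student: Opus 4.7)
The plan is to split the spectrum at the sign change and treat the two halves asymmetrically: apply the pointwise inequality $x\geq 1+\ln x$ only on the non-negative eigenvalues, while dropping the negative ones via the trivial bound $e^{\lambda_i}>0$. The point of this asymmetric treatment is that it lets the identity $M_1=0$ from (\ref{eq1}) convert the residual sum $\sum_{i=2}^{k}\lambda_i$ into $E(G)/2-\lambda_1$, which is exactly what the right-hand side of (\ref{co1}) requires. If one applied the tighter bound on the negative eigenvalues as well, their contribution would collapse the energy term and return the weaker bound from Theorem 1.

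Concretely, I would order the eigenvalues $\lambda_1\geq\lambda_2\geq\cdots\geq\lambda_n$ so that $\lambda_1,\ldots,\lambda_k$ are the non-negative eigenvalues and $\lambda_{k+1},\ldots,\lambda_n$ are the negative ones. Using $M_1=\sum_i \lambda_i=0$, the sum of positive parts equals minus the sum of negative parts, which yields the key identity
$$\frac{E(G)}{2}=\sum_{i=1}^{k}\lambda_i.$$
Then I would decompose
$$EE(G)=e^{\lambda_1}+\sum_{i=2}^{k}e^{\lambda_i}+\sum_{i=k+1}^{n}e^{\lambda_i},$$
apply (\ref{eq7}) in the form $e^{\lambda_i}\geq 1+\lambda_i$ to the $k-1$ middle non-negative eigenvalues, and use $\sum_{i=k+1}^{n}e^{\lambda_i}\geq 0$ for the rest. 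Substituting $\sum_{i=2}^{k}\lambda_i=E(G)/2-\lambda_1$ from the key identity delivers (\ref{co1}).

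The main delicate point is the equality analysis, precisely because the trade-off described above makes the bound $\sum_{i=k+1}^{n}e^{\lambda_i}\geq 0$ strictly slack as soon as there is a single negative eigenvalue. Thus equality in (\ref{co1}) forces $k=n$, i.e., the adjacency spectrum is entirely non-negative. The pointwise equality case of (\ref{eq7}) then gives $\lambda_2=\cdots=\lambda_n=0$, and $M_1=0$ forces $\lambda_1=0$; hence $A(G)=0$ and $G\cong\overline{K_n}$. I expect the hardest part of writing this up carefully to be explaining why the trivial bound on the negative part is the ``right'' choice in spite of being strictly slack — i.e., justifying that sacrificing tightness there is exactly what allows the energy to appear in the final bound.
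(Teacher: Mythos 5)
Your proposal is correct and follows essentially the same route as the paper: the paper also applies $x\leq e^{x}-1$ only to the non-negative eigenvalues $\lambda_2,\ldots,\lambda_k$ and drops the negative tail via $\sum_{i=k+1}^{n}e^{\lambda_i}\geq 0$, merely writing the argument as an upper bound on $E(G)/2$ rather than a lower bound on $EE(G)$. Your equality analysis (slackness of the tail bound forcing $k=n$, then all eigenvalues zero) is in fact stated more carefully than the paper's own.
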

	
	\begin{proof}
		Let $x\geq 0$, consider the following function%
\begin{equation}\label{func021}
    g\left( x\right) =-1-x+e^x  .
\end{equation}%
the equality holds if and only if $x = 0$. Is straightforward show that function $g\left( x\right) $ is increasing in $[0,+\infty)$. Then $g\left( x\right) \geq g\left( 0\right) $, implying that%
\begin{equation}
x\leq e^x -1,x\geq 0.  \label{nlb}
\end{equation}

Note that, $A\left( G\right) $ is a symmetric matrix with zero trace, these
eigenvalues are real with sum equal to zero, i.e,
\begin{equation}
\lambda_{1}\geq \ldots \geq \lambda_{n}   \label{trivial1}
\end{equation}
and
\begin{equation}
\lambda_{1}+\ldots
+\lambda_{n}=0.  \label{trivial}
\end{equation}

Then by the definition of energy join to (\ref{trivial1}) and (\ref{trivial}) we have
\begin{equation}
\frac{E\left( G\right) }{2}=\sum_{\lambda _{i}>0}\lambda
_{i}^{+}=-\sum_{\lambda _{i}<0}\lambda _{i}^{-}.  \label{eqEngy}
\end{equation}

\noindent Supposed that $A\left( G\right)$ have $k$ non-negative eigenvalues, then using (\ref{eqEngy}) and (\ref{nlb}) we obtain

\begin{eqnarray*} \label{enr11}
\frac{E\left( G\right) }{2} &=&\sum_{i=1,\lambda _{i}\geq0}^{k}\lambda _{i}  \\ 
&=&\lambda _{1}+\sum_{i=2,\lambda _{i}\geq0}^{k}\lambda _{i} \\
&\leq &\lambda _{1}+\sum_{i=2,\lambda _{i}\geq0}^{k}\left( e^{\lambda_{i}} -1\right)  \\ 
&=&\lambda _{1}-\left( k-1\right) +\sum_{i=1,\lambda _{i}\geq 0}^{k} e^{\lambda_{i}} - e^{\lambda _{1}}\\
&\leq &\lambda _{1}-\left( k-1\right) +\sum_{i=1,\lambda _{i}\geq0}^{k} e^{\lambda _{i}} +\sum_{i=k+1,\lambda _{i}<0}^{n} e^{\lambda _{i}} - e^{\lambda _{1}}  \\
&=&\lambda _{1}-\left( k-1\right) +\sum_{i=1}^{n} e^{\lambda_{i}} - e^{\lambda _{1}}.%
\end{eqnarray*}

Thereby, we obtain the first result.\\

Suppose now that the equality holds. From the equality in (\ref{func021}), we get $q_1=\ldots=q_n=0.$ Then $k=n$. Therefore $G$ is isomorphic to the $\overline{K_n}.$
	\end{proof}
    A direct consequence of the above Theorem is the following.

\begin{corollary}
		Let $G$ be a $\left( n,m\right) $-graph with $k$ non-negative eigenvalues. Then%
\begin{equation}\label{col1}
    EE\left( G\right) \geq \frac{E\left( G\right) }{2}+ e^{\left(\frac{2m}{n}\right)}+(k-1)- \frac{2m}{n}.
\end{equation}

	Equality holds in (\ref{col1}) if and only if G is isomorphic to $\overline{K_n}$.
	\end{corollary}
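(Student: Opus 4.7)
The plan is to obtain the corollary as an immediate consequence of Theorem~\ref{Eny} combined with the well-known inequality $\lambda_1 \geq 2m/n$, using the same monotonicity trick that appeared throughout the paper (compare with the derivation of (\ref{J}) from (\ref{Energia0})). Since Theorem~\ref{Eny} already gives
\[
EE(G) \geq \frac{E(G)}{2} + e^{\lambda_1} + (k-1) - \lambda_1,
\]
what remains is to replace $e^{\lambda_1} - \lambda_1$ with $e^{2m/n} - 2m/n$.

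First, I would introduce the auxiliary function $\varphi(x) = e^x - x$ on $[0,+\infty)$. A one-line computation gives $\varphi'(x) = e^x - 1 \geq 0$ for $x \geq 0$, so $\varphi$ is nondecreasing on $[0,+\infty)$ (this is essentially the same observation used for $\phi$ in (\ref{fi}) and for $\tau$ in the proof of Theorem~4). Next, I would invoke the standard inequality $\lambda_1 \geq \frac{2m}{n} \geq 0$, which holds for every $(n,m)$-graph, so that
\[
\varphi(\lambda_1) \geq \varphi\!\left(\frac{2m}{n}\right), \qquad\text{i.e.}\qquad e^{\lambda_1} - \lambda_1 \;\geq\; e^{2m/n} - \frac{2m}{n}.
\]
Substituting this into the bound of Theorem~\ref{Eny} yields (\ref{col1}).

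For the equality case, I would trace the two inequalities back. If equality holds in (\ref{col1}), then in particular equality must hold in Theorem~\ref{Eny}, which forces $G \cong \overline{K_n}$; conversely, for $G \cong \overline{K_n}$ we have $m=0$, $E(G)=0$, $\lambda_1 = 2m/n = 0$, $k=n$, and both sides of (\ref{col1}) equal $n$. Thus the equality characterization is inherited directly from Theorem~\ref{Eny} together with the fact that $\lambda_1 = 2m/n$ holds automatically when $G$ is edgeless.

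I do not expect any real obstacle here, since the argument is a verbatim reuse of the monotone-substitution technique already exploited in Theorems~1, 3, and~4; the only mild subtlety is making sure the equality discussion matches, which is handled by noting that all the relevant quantities collapse simultaneously on $\overline{K_n}$.
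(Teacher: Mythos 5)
Your proposal is correct and is exactly the argument the paper intends: it presents the corollary as a direct consequence of Theorem~\ref{Eny}, obtained by the same monotone-substitution of $\lambda_1\geq 2m/n$ into the increasing function $e^x-x$ that is used in the proofs of the earlier theorems. The equality discussion also matches, since all quantities collapse simultaneously for $\overline{K_n}$.
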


	Considering the above Theorem and the lower bound due Das et al in (\ref{2}), we obtain the following result.
	
	\begin{corollary}
		Let $G$ be a connected non-singular graph of order $n$ with $k$ strictly positive eigenvalues. Then%
\begin{eqnarray*}
EE\left( G\right) \geq \frac{1}{2}\left( n-1+\ln \left( \det \left( A\left(
G\right) \right) \right) +\ln \left( \lambda _{1}\right) \right) +e^{\lambda _{1}} +(k-1)-\frac{\lambda _{1}}{2}.
\end{eqnarray*}
	\end{corollary}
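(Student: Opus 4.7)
The plan is to combine Theorem \ref{Eny} with the Das--Mojallal--Gutman lower bound (\ref{2}) in a direct substitution. Since $G$ is assumed non-singular, $0$ is not an eigenvalue of $A(G)$, so the $k$ strictly positive eigenvalues coincide with the $k$ non-negative eigenvalues. This is precisely the compatibility condition that lets me invoke Theorem \ref{Eny} with the same integer $k$ as in the statement of the corollary.

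First, I would apply Theorem \ref{Eny} to obtain
\begin{equation*}
EE(G) \;\geq\; \frac{E(G)}{2} + e^{\lambda_1} + (k-1) - \lambda_1.
\end{equation*}
Then, since $G$ is connected and non-singular, the hypotheses of (\ref{2}) are satisfied, giving
\begin{equation*}
E(G) \;\geq\; \lambda_1 + (n-1) + \ln(\det A) + \ln(\lambda_1),
\end{equation*}
so that
\begin{equation*}
\frac{E(G)}{2} \;\geq\; \frac{1}{2}\bigl(\lambda_1 + (n-1) + \ln(\det A) + \ln(\lambda_1)\bigr).
\end{equation*}

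Substituting this lower bound for $E(G)/2$ into the inequality from Theorem \ref{Eny} and combining the two $\lambda_1$ contributions via $\tfrac{\lambda_1}{2} - \lambda_1 = -\tfrac{\lambda_1}{2}$ yields
\begin{equation*}
EE(G) \;\geq\; \frac{1}{2}\bigl(n-1 + \ln(\det A(G)) + \ln(\lambda_1)\bigr) + e^{\lambda_1} + (k-1) - \frac{\lambda_1}{2},
\end{equation*}
which is the claimed bound.

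There is no real obstacle here: the argument is a one-line substitution, and the main thing to be careful about is the matching of hypotheses (connectedness and non-singularity for (\ref{2}), and the identification of ``$k$ non-negative eigenvalues'' with ``$k$ strictly positive eigenvalues'' for Theorem \ref{Eny} under non-singularity). Everything else is an algebraic simplification of two $\lambda_1$-terms, and no new inequality or case analysis is required.
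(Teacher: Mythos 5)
Your proposal is correct and is exactly the paper's argument: the authors state the corollary as a direct consequence of Theorem \ref{Eny} combined with the Das et al.\ bound (\ref{2}), which is precisely your one-line substitution, and your remark that non-singularity makes the $k$ non-negative eigenvalues coincide with the $k$ strictly positive ones is the right way to match the hypotheses.
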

	
Applying the demonstration technique used in above Theorem, to the signless Laplacian matrix of a graph G, we obtain the following result.

\begin{theorem}
Let $\left( n,m\right) $ graph $G$. Then%
\[
EE\left( Q\left( G\right) \right) \geq QE\left( G\right) +e^{q_{1}}
 + (n-1) -q_{1}.
\]%
The equality holds if and only if $G=\overline{K_{n}}$.
\end{theorem}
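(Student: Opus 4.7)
The plan is to transplant the demonstration of Theorem~\ref{Eny} from the adjacency matrix to the signless Laplacian matrix $Q(G)$. The argument rests on two ingredients: the elementary inequality $x \leq e^x - 1$ valid for $x \geq 0$, and a decomposition of the target energy as a sum over non-negative \emph{shifted} eigenvalues. For the adjacency case one used $E(G)/2 = \sum_{\lambda_i \geq 0}\lambda_i$, relying on the trace identity $\sum_i \lambda_i = 0$. For the signless Laplacian case, since $Q(G)$ is positive semidefinite with $\mathrm{tr}\,Q(G) = 2m$, the shifted spectrum $\alpha_i = q_i - 2m/n$ satisfies $\sum_i \alpha_i = 0$, so the same kind of decomposition gives $QE(G)/2 = \sum_{\alpha_i \geq 0}\alpha_i$.

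Starting from $EE(Q(G)) = e^{q_1} + \sum_{i=2}^n e^{q_i}$, I would apply $\alpha \leq e^\alpha - 1$ to each $\alpha_i \geq 0$ with $i \geq 2$, exactly as in the proof of Theorem~\ref{Eny}. Summing and enlarging the right-hand side by the (strictly positive) terms $e^{\alpha_i}$ corresponding to the negative $\alpha_i$'s should yield, after rearrangement,
\[
\frac{QE(G)}{2} + e^{\alpha_1} + (k'-1) - \alpha_1 \;\leq\; \sum_{i=1}^n e^{\alpha_i} \;=\; e^{-2m/n}\,EE(Q(G)),
\]
where $k'$ denotes the number of indices with $\alpha_i \geq 0$. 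Multiplying through by $e^{2m/n}$ and recognising $e^{2m/n}e^{\alpha_1} = e^{q_1}$ should produce a lower bound on $EE(Q(G))$ of the claimed shape.

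The main obstacle will be the bookkeeping that converts the natural intermediate bound (which carries factors of $e^{2m/n}$ and the count $k'$) into the clean form $QE(G) + e^{q_1} + (n-1) - q_1$. I expect to use $e^{2m/n} \geq 1$ to absorb the exponential factor, combined with the direct inequality $q_i \leq e^{q_i} - 1$ (valid since $q_i \geq 0$) applied to those indices $i \geq 2$ with $\alpha_i < 0$, so that the count $k'-1$ can be upgraded to $n-1$ at the cost of only non-negative slack. Once both estimates are combined, the factor $e^{2m/n}$ in front of $QE(G)/2$ can be dominated using $e^{2m/n} \geq 1 + 2m/n$, yet another instance of the basic inequality, in a way that collapses the fraction $QE(G)/2$ to the full $QE(G)$.

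For the equality case, equality in $x \leq e^x - 1$ forces the engaged $\alpha_i$ to vanish, so $q_2 = \cdots = q_n = 2m/n$; since $\sum_i q_i = 2m$ this also forces $q_1 = 2m/n$, hence $Q(G) = (2m/n)I$. As $Q(G) = D(G) + A(G)$ with $D(G)$ diagonal and $A(G)$ having zero diagonal and non-negative off-diagonal entries, this is only possible when $A(G) = 0$, i.e.\ $G \cong \overline{K_n}$.
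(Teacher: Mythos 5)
Your plan has a genuine gap: the shift to $\alpha_i=q_i-2m/n$ produces an intermediate inequality that is \emph{strictly weaker} than the stated bound, so the subsequent ``bookkeeping'' cannot succeed. Your displayed estimate, after multiplying by $e^{2m/n}$, reads
\[
EE(Q(G))\;\ge\; e^{q_1}+e^{2m/n}\Bigl[\tfrac{QE(G)}{2}+(k'-1)-\alpha_1\Bigr],
\]
and the bracket equals $e^{2m/n}\bigl[(k'-1)+\sum_{i\ge 2,\ \alpha_i\ge 0}\alpha_i\bigr]$, which can vanish. Test the star $K_{1,4}$: the $Q$-spectrum is $5,1,1,1,0$, so $2m/n=1.6$, $q_1=5$, $QE=6.8$, $k'=1$ and $\sum_{i\ge2,\ \alpha_i\ge0}\alpha_i=0$. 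Your bound collapses to $EE(Q)\ge e^{5}\approx 148.41$, and even after restoring the discarded indices via $e^{q_i}\ge 1$ you only reach $e^{5}+4\approx 152.41$; the theorem claims $QE+e^{q_1}+(n-1)-q_1=e^{5}+5.8\approx 154.21$ (the true value is $e^{5}+3e+1\approx 157.56$). The structural reason is the term $-e^{2m/n}\alpha_1$: since $\alpha_1\ge 0$ and $e^{2m/n}\ge 1$, it is \emph{more} negative than the $-q_1$ you need, and $\tfrac{QE}{2}-\alpha_1$ can be $0$ while $QE-q_1$ is not. One can try to patch this by also keeping the contributions $q_i$ from the indices with $\alpha_i<0$ (for $K_{1,4}$ that adds $3$ and just clears the bar), but then you are left with a nontrivial general inequality in $n,m,q_1,k',QE$ that you have neither stated nor verified; it is not routine absorption via $e^{2m/n}\ge 1+2m/n$.

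The paper avoids the shift entirely, and this is the key difference. Because $Q(G)$ is positive semidefinite, \emph{all} $q_i$ are nonnegative, so $x\le e^{x}-1$ can be applied directly to $q_2,\dots,q_n$, producing the full count $n-1$ at once: the chain is $QE(G)=\sum_i\lvert q_i-\tfrac{2m}{n}\rvert\le\sum_i q_i=q_1+\sum_{i\ge2}q_i\le q_1+\sum_{i\ge2}(e^{q_i}-1)=q_1+EE(Q(G))-(n-1)-e^{q_1}$, which rearranges to the claim. The zero-trace decomposition you imported from Theorem~\ref{Eny} is needed for the adjacency matrix precisely because its spectrum has negative entries; here it only loses information. (Note also that the paper's opening step $QE(G)\le\sum_i q_i$ is itself asserted without proof and is not a formal triangle-inequality consequence, so any complete write-up should justify it.) Your equality analysis is essentially fine but moot until the main inequality is actually established.
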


\begin{proof}
By the definition of signless Laplacian energy, we have%
\begin{eqnarray*}
QE\left( \left( G\right) \right) &=&\sum_{i=1}^{n}\Big\arrowvert q_{i}-\frac{%
2m}{n}\Big\arrowvert\leq \sum_{i=1}^{n}q_{i} \\
&=&q_{1}+\sum_{i=2}^{n}q_{i} \\
&\leq &q_{1}+\sum_{i=2}^{n}\left( e^{ q_{i}} -1\right) \\
&=&q_{1}+\sum_{i=1}^{n}e^{ q_{i}} -\left( n-1\right) -e^{q_{1}}
 \\
&=&q_{1}+EE(Q(G)) -\left( n-1\right) -e^{q_{1}}
.
\end{eqnarray*}%
Suppose now that the equality holds. From the equality in (\ref{func021}), we get $q_1=\ldots=q_n=0.$ Therefore $G$ is isomorphic to the $\overline{K_n}$. Therefore, we prove the result. 
\end{proof}

A direct consequence of the above Theorem is the following result.

\begin{corollary}
		Let $G$ be a $\left( n,m\right) $-graph. Then%
\begin{equation}\label{col1}
    EE\left( Q\left( G\right) \right) \geq QE\left( G\right) +e^{\left(
\frac{4m}{n}\right)} + (n-1) - \frac{4m}{n}.
\end{equation}
	Equality holds in (\ref{col1}) if and only if G is isomorphic to $\overline{K_n}$.
	\end{corollary}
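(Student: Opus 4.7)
The plan is to bootstrap off the immediately preceding theorem, which already supplies the bound
\[
EE(Q(G)) \geq QE(G) + e^{q_1} + (n-1) - q_1,
\]
and then replace $q_1$ inside the expression $e^{q_1} - q_1$ by the cruder quantity $4m/n$, via the same monotonicity argument that was used earlier in the excerpt for the $EE(Q(G)) \geq e^{4m/n}+(n-1)+2m-4m/n$ theorem.

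More concretely, I would first introduce the auxiliary function $\tau(x) = e^{x} - x$ on $[0,\infty)$. Since $\tau'(x) = e^{x} - 1 \geq 0$ for $x \geq 0$, the map $\tau$ is non-decreasing, and hence $x \mapsto QE(G) + e^{x} + (n-1) - x$ is also non-decreasing on $[0,\infty)$. Next I would combine Feng's inequality $q_1 \geq 2\lambda_1$, already invoked earlier in the excerpt, with the elementary bound $\lambda_1 \geq 2m/n$, obtaining $q_1 \geq 4m/n \geq 0$. Evaluating $\tau$ at $q_1$ and at $4m/n$ then gives $e^{q_1} - q_1 \geq e^{4m/n} - 4m/n$; adding $QE(G) + (n-1)$ to both sides yields the advertised lower bound.

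For the equality discussion, I would note that saturating the corollary forces saturation of both the preceding theorem and of the monotonicity step $\tau(q_1) \geq \tau(4m/n)$. The preceding theorem already characterises its equality case as $G \cong \overline{K_n}$; conversely, for the edgeless graph one has $m = 0$, $q_1 = 0 = 4m/n$, and every inequality collapses simultaneously. Hence equality in (\ref{col1}) holds exactly when $G \cong \overline{K_n}$.

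I do not anticipate any serious obstacle: the argument is a one-step reduction to an already proved theorem, combined with the standard $e^{x}-x$ monotonicity trick that the paper uses repeatedly. The only point requiring mild care is to verify that the substitution happens at nonnegative arguments, so that monotonicity of $\tau$ on $[0,\infty)$ is actually in force; this is automatic because $q_1 \geq 0$ for the signless Laplacian and $4m/n \geq 0$.
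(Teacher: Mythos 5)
Your proposal is correct and matches the paper's intent: the corollary is stated there as a direct consequence of the preceding theorem, obtained exactly as you do by combining $q_1 \geq 2\lambda_1 \geq 4m/n \geq 0$ with the monotonicity of $x \mapsto e^{x} - x$ on $[0,\infty)$, the same device the paper uses in its Theorem on $EE(Q(G))$. Your handling of the equality case (reducing it to the equality case of the preceding theorem and checking $\overline{K_n}$ directly) is also the natural completion of what the paper leaves implicit.
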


\section{Comparison and Conclusions}

In this section, we present some computational experiments to compare our new lower bounds with the lower bounds existing in the literature for connected graphs.

Some examples of families of graphs where Theorem 1 is verified, more than one, it is shown that the lower bound for the Estrada index obtained in this first main results is better than the Estrada index existing in the literature.
Then, for our analysis we will use the following notation, for (\ref{cota0}) and (\ref {123}):  $$EE(G)\geq e^{\left(\frac{2m}{n}\right)}+(n-1)-\left(\frac{2m}{n} \right)=J(G),$$  $$EE(G)\geq\sqrt{n^2+4m}=CP(G).$$
\begin{enumerate}
\item {\bf Star graph}. Note that $S_n$ denote a complete bipartite graph $K_{1,k}.$ 
It is well known that $S_n$ hold the relation between its edges and vertices is given by $m=n-1$. Then, replacing this condition, we have $J(S_n)=e^{\left(2-\frac2n\right)}+\left(n+\frac2n\right)-3$ and $CP(S_n)=\sqrt{n^2+4n-4}$, then it is easily demonstrated that $J(S_n)\geq CP(S_n),$ for all $n\neq 2.$ Thereby, we obtain that $EE(S_n)\geq J(S_n)\geq CP(S_n)$, which shows that our lower bound is much better than the one proposed by (\ref{123}) for the Estrada index. \\

\item {\bf Path graph}. We will denoted to $P_n$ a path. 
Note that $P_n$ satisfies the relation $m=n-1$ which is the same relation that exists in $S_n$ between $n$ and $m$, then we shows $EE(P_n)\geq J(P_n)\geq CP(P_n),$ for all $n \neq 2 $. \\

\item {\bf Complete graph}. For $K_n$ 
is true that $m=\frac{n(n-1)}{2}$. Then we have $J(K_n)=e^{n-1}$ and $CP(K_n)=\sqrt{3n^2-2n}$. then it is readily demonstrated that $J(K_n)\geq CP(K_n),$ for all $n \neq 2.$ Thereby, we obtain that $EE(K_n)\geq J(K_n)\geq CP(K_n)$, which shows that our lower bound is much better than the one proposed by (\ref{123}) for the Estrada index.\\

\item {\bf Cycle graph}. For a cycle graph $C_n$, the condition 
$m=n$ is verified. So we have $J(C_n)=e^2+n-3$ and $CP(C_n)=\sqrt{n^2+4n}$, then we show that $J(C_n)\geq CP(C_n)$ for all $n.$ Then, $EE (C_n)\geq J(C_n)\geq CP(C_n)$, which shows that our lower bound is much better. \\

\end{enumerate}

Therefore, we will claim that, for some kinds of graphs $G$, $J(G) \geq CP(G).$\\

\newpage
\section*{References}

\end{document}